
\documentclass[reqno, 11pt]{amsart}

\usepackage{harpoon}
\usepackage{amsmath,amsthm,amssymb}
\usepackage{mathbbol} 
\usepackage{xfrac}
\usepackage{faktor}
\usepackage{mathrsfs}
\usepackage{amssymb,amsmath,amsxtra,graphicx,amsfonts,bm}
\usepackage{amsthm}
\usepackage{times}
\usepackage{bbold}
\usepackage{stmaryrd}
\usepackage{array}
\usepackage{mathrsfs}
\usepackage{dsfont}
\usepackage{url}
\usepackage{tikz}
\usepackage{tikz-cd}
\usepackage[all]{xy}
\usepackage{graphicx}
\usetikzlibrary{shapes,fit,arrows,calc,backgrounds,positioning,matrix,tqft}
\usepackage[T1]{fontenc}

\setlength{\oddsidemargin}{.75in}
\setlength{\evensidemargin}{.75in}
\setlength{\textwidth}{5.5in}


\theoremstyle{plain}
\newtheorem{thm}{Theorem}
\newtheorem{lem}[thm]{Lemma}
\newtheorem{cor}[thm]{Corollary}

\theoremstyle{definition}

\newtheorem*{claim*}{Claim}

\newtheorem*{ack}{Acknowledgment}
\newtheorem*{notat}{Notations and conventions}

\theoremstyle{remark}


\newcommand{\ZZ}{\mathbb{Z}}

\newcommand{\ot}{\otimes}
\newcommand{\ch}{\operatorname{char}}

\newcommand{\onto}{\twoheadrightarrow}

\newcommand{\longiso}{\overset{\text{\raisebox{-.5ex}{$\sim$}}}{\longrightarrow}}

\renewcommand{\k}{\mathbb{k}}

\newcommand{\gen}[1]{\langle{#1}\rangle}
\renewcommand{\bar}[1]{\overline{#1}}

\newcommand{\Hom}{\operatorname{Hom}}

\renewcommand{\d}{\delta}
\newcommand{\D}{{\rm \Delta}}

\newcommand{\e}{\epsilon}

\newcommand{\sP}{\mathscr{P}}

\DeclareMathOperator{\Char}{char}

\newcommand{\fg}{\mathfrak{g}}

\newcommand{\bdot}{\,\text{\raisebox{-.45ex}{$\boldsymbol{\cdot}$}}\,}

\newcommand{\supp}{\operatorname{supp}}

\newcommand{\gr}{\mathrm{gr}}

\newcommand{\bfn}{\mathbf{n}}
\newcommand{\bfm}{\mathbf{m}}
\newcommand{\bfs}{\mathbf{s}}
\newcommand{\bfr}{\mathbf{r}}
\newcommand{\bff}{\mathbf{f}}
\newcommand{\bfg}{\mathbf{g}}

\newcommand{\bfi}{\mathbf{i}}
\newcommand{\bfj}{\mathbf{j}}
\newcommand{\bfl}{\mathbf{l}}
\newcommand{\bfk}{\mathbf{k}}

\newcommand{\bfzero}{\mathbf{0}}




\begin{document}

\title[On Actions of Connected Hopf Algebras]%
{On Actions of Connected Hopf Algebras}

\author{Ramy Yammine}

\address{Department of Mathematics, Temple University, Philadelphia, PA 19122, USA}

\maketitle

\begin{abstract}
Let $H$ be a connected Hopf algebra acting on an algebra $A$. Working over a base field having
characteristic
$0$, we show that for a given prime (semi-prime, completely prime) ideal $I$ of $A$, 
the largest $H$-stable ideal of A contained in $I$ is also prime (semi-prime, completely prime).
We also prove a similar result for certain subrings of
convolution algebras.
\end{abstract}

\maketitle


\section{Introduction}
\label{S:Intro}

\subsection{}
\label{SS:Intro1}
This paper investigates certain aspects of actions of Hopf algebras on other algebras.
Specifically, let $A$ be an algebra (associative, with $1$) over a field $\k$
and let $H$ be a Hopf $\k$-algebra.
A (left) \emph{$H$-action} on $A$ is a $\k$-linear map
$H \ot_\k A \to A$, $h\ot a \mapsto h.a$, 
that makes $A$ into a 
left $H$-module and satisfies the conditions 
$h.(ab) =  (h_1.a)(h_2.b)$ and $h.1 =  \gen{\e,h} 1$ for all $h\in H$ and $a,b \in A$.
Here, $\Delta \colon H \to H\ot H$, $h \mapsto h_1 \ot h_2$\,,  is the comultiplication 
and $\e \colon H \to \k$ is the counit. 
An algebra $A$ that is equipped with an $H$-action is called an \emph{$H$-module algebra}.
An ideal of $A$ that is also an $H$-submodule 
is referred to as an \emph{$H$-ideal}. 
The sum of all $H$-ideals of $A$ that are
contained in a given arbitrary ideal $I$, clearly the unique maximal $H$-ideal of $A$ 
that is contained in $I$, is called the \emph{$H$-core} of $I$; it will be 
denoted by $(I:H)$. Explicitly,
\[
(I:H) = \{ a \in A \mid h.a \in I \text{ for all } h \in H \}. 
\]

\subsection{}
\label{SS:Intro2}
Our main concern is with the transfer of properties from $I$ to $(I:H)$ under the assumption
that $\ch\k = 0$ and $H$ is \emph{connected}, that is, $H$ has no simple subcoalgebras other than $\k = \k 1$. 
This class of Hopf algebras
includes all enveloping algebras of Lie algebras. In fact, if $\ch\k = 0$, then
every cocommutative connected Hopf algebra is an enveloping algebra \cite[\S\S 5.5, 5.6]{sM93}.
In this setting, the theorem below is known \cite[3.3.2 and 3.8.8]{jD96}. Our contribution consists
in removing the cocommutativity assumption.

\begin{thm}
\label{Thm}
Let $H$ be a connected Hopf algebra over a field of characteristic $0$, let $A$ be an
$H$-module algebra, and let $I$ be an ideal of $A$. If $I$ is prime (semi-prime, completely prime), then so is $(I:H)$.
\end{thm}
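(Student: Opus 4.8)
The plan is to mimic Dixmier's argument for enveloping algebras, with the PBW filtration of $U(\fg)$ replaced by the coradical filtration $\k=H_0\subseteq H_1\subseteq\cdots$ of the connected Hopf algebra $H$. I will use that $\bigcup_n H_n=H$, that $H_iH_j\subseteq H_{i+j}$, $\Delta(H_n)\subseteq\sum_{i+j=n}H_i\ot H_j$ and $\ant(H_n)=H_n$, and, most importantly, that $\Delta(h)-h\ot1-1\ot h\in\sum_{1\le i\le n-1}H_i\ot H_{n-i}$ for $h\in H_n$; write $\bar\Delta$ for the induced coproduct of the connected graded Hopf algebra $\gr H$. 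Writing $L=(I:H)$ and, for $a\in A\setminus L$, $\nu(a)=\min\{\,n:H_n.a\not\subseteq I\,\}$, I note that $\nu(a)$ is finite exactly because $a\notin(I:H)$, and that $H_{\nu(a)-1}.a\subseteq I$. I then assume for contradiction that $L$ is not prime; so there are $a,b\in A\setminus L$ with $aAb\subseteq L$, and I set $m=\nu(a)$, $n=\nu(b)$.

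The key step is a leading-term computation. For $x\in A$ we have $axb\in L$, hence $g.(axb)\in L\subseteq I$ for every $g\in H$, since $L$ is an $H$-ideal. Take $g\in H_{m+n}$ and expand $g.(axb)=\sum(g_1.a)(g_2.x)(g_3.b)$ using $\Delta^{(2)}(g)\in\sum_{i+j+k=m+n}H_i\ot H_j\ot H_k$. Every summand whose first tensor factor lies in $H_{m-1}$, or whose third lies in $H_{n-1}$, is $\equiv0\pmod I$ (because $H_{m-1}.a\subseteq I$, $H_{n-1}.b\subseteq I$ and $I$ is two-sided); since $i+j+k=m+n$ the only other possibility is $(i,j,k)=(m,0,n)$. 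One obtains $g.(axb)\equiv\sum_\alpha(p_\alpha.a)\,x\,(q_\alpha.b)\pmod I$, where $\sum_\alpha\bar p_\alpha\ot\bar q_\alpha\in\gr_m H\ot\gr_n H$ is the degree-$(m,n)$ part of $\bar\Delta(\bar g)$. Hence $\sum_\alpha(p_\alpha.a)\,x\,(q_\alpha.b)\in I$ for all $x\in A$ and all $g\in H_{m+n}$.

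Now pass to the prime ring $R=A/I$. The assignments $\bar p\mapsto\overline{p.a}$ and $\bar q\mapsto\overline{q.b}$ are well-defined linear maps $\gr_m H\to R$ and $\gr_n H\to R$, both nonzero since $H_m.a\not\subseteq I$ and $H_n.b\not\subseteq I$, and the displayed relation says that the image in $R\ot_\k R$ of the degree-$(m,n)$ part of every $\bar\Delta(\bar g)$ is annihilated by all ``sandwich'' operators $u\ot v\mapsto u\bar x v$. As a prime ring contains no nonzero simple tensor with this property, it suffices to produce one $g$ whose leading coefficient maps to a nonzero simple tensor $\overline{p_*.a}\ot\overline{q_*.b}$: then $\overline{p_*.a}\,R\,\overline{q_*.b}=0$ forces $p_*.a\in I$ or $q_*.b\in I$, against the choice of $m$ or $n$. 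For the semi-prime case one runs the same computation with $b=a$, using the degree-$(m,m)$ part and the fact that $uRu=0$ forces $u=0$ in a semi-prime ring; for the completely prime case one starts instead from the hypothesis that $A/L$ has a zero-divisor, i.e.\ $ab\in L$ with $a,b\notin L$, works in the domain $A/I$, and there a single simple-tensor leading coefficient already yields a contradiction.

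I expect the genuine difficulty to be the production of a suitable $g$ — equivalently, controlling the leading coefficients — and this is where the structure of $\gr H$ enters. When $H$ is cocommutative, $\gr H=S(\fg)$, which in characteristic $0$ is spanned by powers $\xi^k$ ($\xi\in\fg$); the degree-$(m,n)$ part of $\bar\Delta(\xi^{m+n})$ is the simple tensor $\binom{m+n}{m}\xi^m\ot\xi^n$, and since the locus $\{\xi:\overline{\xi^m.a}\ne0\}$ is a nonempty (hence dense) open subset of the irreducible variety $\fg$ while $\{\xi:\overline{\xi^n.b}=0\}$ is closed, the dichotomy forces $H_n.b\subseteq I$ — this is Dixmier's argument. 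For a general connected $H$ one cannot assume $\gr H$ is primitively generated (for instance $\gr H$ may be a shuffle algebra), so I would analyze the coradical filtration directly, extracting simple tensors as suitable components of $\bar\Delta(\bar g)$ for $g$ running over a large enough family, and — in the prime, as opposed to completely prime, case — passing from the resulting generalized polynomial identities in $R$ to a contradiction via the extended centroid of $R$. Finally, the hypothesis $\ch\k=0$ is indispensable: in characteristic $p$ the order $\nu$ can fail to grow along the filtration, and indeed for $A=\k[t]$ with $H=u(\fg)$ the restricted enveloping algebra of $\fg=\k\partial$, $\partial=\mathrm d/\mathrm d t$, acting on $A$, and $I=(t)$, one computes $(I:H)=(t^p)$, which is not prime; the argument uses characteristic $0$ precisely to exclude such truncation and to keep the binomial coefficients arising above invertible.
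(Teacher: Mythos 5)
Your setup is the right one and matches the classical Dixmier strategy: reduce to showing that for $g\in H_{m+n}$ the congruence $g.(axb)\equiv\sum_\alpha(p_\alpha.a)\,x\,(q_\alpha.b)\pmod I$ holds with $\sum_\alpha \bar p_\alpha\ot\bar q_\alpha$ the bidegree-$(m,n)$ component of the coproduct. But the proof stops exactly at the point you yourself identify as the genuine difficulty: producing a $g$ whose leading bicomponent becomes a nonzero \emph{simple} tensor after applying $p\mapsto\overline{p.a}$ and $q\mapsto\overline{q.b}$. Neither of the two routes you sketch closes this gap. The Zariski-density argument uses that $\gr H=S(\fg)$ is spanned by powers $\xi^{m+n}$ of primitives, which fails for a general connected $H$ (as you note). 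The fallback via generalized polynomial identities and the extended centroid is not carried out, and it is not clear it can be: from $\sum_\alpha \overline{p_\alpha.a}\,\bar x\,\overline{q_\alpha.b}=0$ for all $x$ one can only conclude something after establishing linear independence of the $\overline{p_\alpha.a}$ over the extended centroid, and nothing in the data gives you that. The same problem already occurs in your completely prime case: a relation $\sum_\alpha \overline{p_\alpha.a}\;\overline{q_\alpha.b}=0$ in a domain is not a contradiction unless the sum has a single term. So as written this is an outline with the central step missing.

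The paper closes precisely this gap by refining the invariant. Instead of the single integer $\nu(a)$ attached to the coradical filtration, it builds (using $\ch\k=0$ and the fact that $\gr H$ is a polynomial algebra) a PBW basis $(e_{\bfn})_{\bfn\in M}$ of $H$ indexed by a monoid $M=\ZZ_+^{(\Lambda)}$ carrying a translation-invariant well-order, and proves (Lemma~\ref{Tech1}) that
$\D(e_{\bfn})\in\sum_{\bfm+\bfm'=\bfn}e_{\bfm}\ot e_{\bfm'}+(H\ot H)^{<\bfn}$.
Attaching to $a$ and $b$ the functionals $f\colon h\mapsto\overline{h.a}$ and $g\colon h\mapsto\overline{h.b}$ and letting $\bff,\bfg$ be the minima of their supports, translation invariance of the order forces every pair $(\bfm,\bfm')$ with $\bfm+\bfm'=\bff+\bfg$ other than $(\bff,\bfg)$ to have $\bfm<\bff$ or $\bfm'<\bfg$; hence the leading coefficient of the convolution is the \emph{single} product $f(e_{\bff})\,g(e_{\bfg})$, and the simple tensor you need appears automatically for $g=e_{\bff+\bfg}$ rather than having to be hunted down. (The paper phrases this as: the image of $A\to\Hom_\k(H,A/I)$ is a prime subring of the convolution algebra, with kernel $(I:H)$.) If you want to salvage your draft, this order-refinement is the missing ingredient; your closing remarks on the necessity of $\ch\k=0$, including the restricted enveloping algebra example with $(I:H)=(t^p)$, are correct and worth keeping.
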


We remark that a result analogous to Theorem~\ref{Thm} is also known for rational actions of 
connected affine algebraic groups 
over algebraically closed base fields \cite[Corollary 1.3]{wC92}, \cite[Proposition 19]{mL08}.

\subsection{}
\label{SS:Intro3}
Our main tool in the proof of Theorem~\ref{Thm}
is an analysis of the coradical filtration of $H$. This will allow us to construct 
a certain PBW basis of $H$, which 
is then used to prove that certain subrings of
convolution algebras of $H$ are prime (semiprime, a domain); see Theorem~\ref{T:Tech3} below.
Theorem~\ref{Thm} will be derived from this result.

\begin{notat}
The notation introduced in the foregoing will remain in effect below.
We work over a base field $\k$ and will write $\ot = \ot_\k$\,.
Throughout, $H$ will denote a Hopf $\k$-algebra. 
All further assumptions on $\k$ and $H$ will be specified as they are needed. 
We denote by $\ZZ_+$ the set of non-negative integers and by $\ZZ_{>0}$
the set of strictly positive integers.
\end{notat}


\section{Set-theoretic preliminaries}

We introduce here a certain monoid that will be essential for the proof of Theorem \ref{Thm}.

\subsection{The monoid $M$}\label{monoidM}

Let $\Lambda$ be a set. 
We consider functions $\bfm \colon \Lambda \longrightarrow \ZZ_{+}$ whose support 
$\supp\bfm:= \{ \lambda \in \Lambda \mid \bfm(\lambda) \neq 0 \}$ is finite.  
Using the familiar ``pointwise'' addition of functions, $(\bfm+\bfm')(\lambda) = \bfm(\lambda) + \bfm'(\lambda)$
for $\lambda \in \Lambda$, we obtain a commutative monoid,
\[ 
M := \ZZ_{+}^{(\Lambda)} = \{ \bfm \colon \Lambda \longrightarrow \ZZ_{+} \mid \supp\bfm \text{ is finite}\}, 
\]
The identity element is the unique function $\bfzero \in M$ 
such that $\supp\bfzero = \varnothing$; so 
$\bfzero(\lambda) = 0$ for all $\lambda \in \Lambda$. We will think of $\Lambda$ as a subset 
of $M \setminus \{ \bfzero\}$, 
identifying $\lambda \in \Lambda$ with the function $\delta_\lambda \in M$ that is defined by
\[
\delta_\lambda(\lambda') = \delta_{\lambda,\lambda'} \qquad (\lambda, \lambda' \in \Lambda), 
\]
where $\delta_{\lambda,\lambda'}$ is the Kronecker delta.
Thus, $\supp \d_\lambda = \{ \lambda \}$
and each $\bfm \in M$ has the form $\bfm = \sum_{\lambda \in \supp\bfm} \bfm(\lambda) \d_\lambda$\,.

Now assume that $\Lambda$ is equipped with a map $|\bdot| : \Lambda \longrightarrow \ZZ_{> 0}$\,,
which we will think of  as a ``degree.'' We extend $|\bdot|$ from $\Lambda$ to $M$,
defining the degree of an element $\bfm \in M$ by 
\[
|\bfm| := \sum\limits_{\lambda \in \supp\bfm}\bfm(\lambda)|\lambda| \in \ZZ_+\,.
\] 
Note that $|\delta_\lambda| = |\lambda|$ and that
degrees are additive: $|\bfm+ \bfm'| = |\bfm| + |\bfm'|$ for $\bfm,\bfm' \in M$.
For a given $d \in \ZZ_+$\,, we put
\[ 
M_{d} := \{ \bfm \in M \mid |\bfm| = d \} \qquad \text{and} \qquad \Lambda_d := \Lambda \cap M_d \,.
\]
Thus,  $M = \bigsqcup_{d \in \ZZ_+} M_d$ and 
$M_0 = \{ \bfzero \}$, $\Lambda_0 = \varnothing$.

\subsection{A well-order on $M$}
\label{orderM}

Now assume that $\Lambda$ is equipped with a total order $\le$ such that 
$\Lambda_1 < \Lambda_2 < \dots $\,, 
that is, elements of $\Lambda_i$ precede elements of $\Lambda_j$ in this order if $i < j$.
We extend $\le$ to $M$ as follows. First order elements of $M$ by degree, i.e., 
\[
\{ \bfzero \} = M_0 < M_1 < M_2 < \dots\,.
\]
For the tie-breaker, let $\bfn \neq \bfm \in M_d$ and put 
\begin{equation*}
\mu = \mu_{\bfn,\bfm}: = \max\{\lambda \in \Lambda \mid \bfn(\lambda) \neq \bfm(\lambda)\}.
\end{equation*}
Note that
$\{\lambda \in \Lambda \mid \bfn(\lambda) \neq \bfm(\lambda)\}$  is 
finite, being contained in $\supp(\bfn) \cup \supp(\bfm)$. 
If $\bfn(\mu) > \bfm(\mu)$, then we define $\bfn > \bfm$.
For $\lambda \neq \lambda' \in \Lambda$, this becomes $\delta_\lambda >  \delta_{\lambda'}$ 
if and only if $\lambda > \lambda'$. 

\begin{lem}
\label{OrderM}
The above order on $M$ has the following properties:
\begin{enumerate}
    \item $\le$ is a total order on $M$ and $\bfzero$ is the unique minimal element of $M$.
    \item If $\bfn,\bfm \in M$ are such that $\bfn<\bfm$, then $\bfn+\bfr < \bfm+\bfr$ for every $\bfr\in M$.
    \item If the order of each $\Lambda_d$ is a well-order, then $\leq$ is a well-order of $M$.
\end{enumerate}
\end{lem}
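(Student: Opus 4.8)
The plan is to verify the three items essentially by unwinding the definitions, treating them in order and reusing the earlier parts where convenient. For part (a), I would first check that $\le$ is a total order. Reflexivity is clear, and antisymmetry follows because if $\bfn \ne \bfm$ then they differ at some $\lambda$, so the tie-breaker $\mu_{\bfn,\bfm}$ is well-defined and exactly one of $\bfn(\mu) > \bfm(\mu)$, $\bfm(\mu) > \bfn(\mu)$ holds (after first comparing degrees). For comparability of any two distinct $\bfn,\bfm$: either $|\bfn| \ne |\bfm|$, in which case degree decides, or $|\bfn| = |\bfm|$ and we fall back on $\mu_{\bfn,\bfm}$, which exists because the difference set is finite and nonempty. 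Transitivity is the one genuinely fiddly case: given $\bfn < \bfm < \bfr$, if any two of the degrees differ the conclusion is immediate from additivity of degree and transitivity of $\le$ on $\ZZ_+$; if all three degrees are equal, I would let $\mu_1 = \mu_{\bfn,\bfm}$, $\mu_2 = \mu_{\bfm,\bfr}$ and argue by cases on $\max(\mu_1,\mu_2)$, checking that $\bfn$ and $\bfr$ agree strictly above that max and that the value at the max is strictly larger for $\bfr$. Finally $\bfzero$ is the unique minimum because $|\bfzero| = 0 \le |\bfm|$ for all $\bfm$, with equality only for $\bfm = \bfzero$ since $|\lambda| > 0$ for every $\lambda \in \Lambda$.

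For part (b), suppose $\bfn < \bfm$. If $|\bfn| < |\bfm|$ then $|\bfn + \bfr| = |\bfn| + |\bfr| < |\bfm| + |\bfr| = |\bfm + \bfr|$, so $\bfn + \bfr < \bfm + \bfr$. If $|\bfn| = |\bfm|$, then $|\bfn+\bfr| = |\bfm+\bfr|$, and the key observation is that translation by $\bfr$ does not change the difference set: $(\bfn+\bfr)(\lambda) \ne (\bfm+\bfr)(\lambda)$ if and only if $\bfn(\lambda) \ne \bfm(\lambda)$. Hence $\mu_{\bfn+\bfr,\bfm+\bfr} = \mu_{\bfn,\bfm} =: \mu$, and $(\bfn+\bfr)(\mu) = \bfn(\mu) + \bfr(\mu) < \bfm(\mu) + \bfr(\mu) = (\bfm+\bfr)(\mu)$, so indeed $\bfn+\bfr < \bfm+\bfr$.

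For part (c), assume each $\Lambda_d$ is well-ordered; I must show every nonempty $S \subseteq M$ has a least element. First pick the least $d$ with $S \cap M_d \ne \varnothing$ — possible since the degrees form a nonempty subset of $\ZZ_+$ — so it suffices to show $M_d$ is well-ordered for each fixed $d$. Here I would use that $M_d$ embeds into a finite product of well-ordered sets: an element $\bfm \in M_d$ is supported on $\Lambda_1 \sqcup \dots \sqcup \Lambda_d$, and since $|\bfm| = d$ with each $|\lambda| \ge 1$, the support has at most $d$ elements drawn from among those with degree $\le d$. The cleanest route is probably to note that the order on $M_d$ is a restriction of a lexicographic-type order and invoke the standard fact that finite (or suitably controlled) products of well-orders are well-ordered; alternatively, argue directly that an infinite strictly descending chain in $M_d$ would, by pigeonhole on the finitely many possible values of $\mu$ and the well-ordering of each $\Lambda_d$, produce an infinite descending chain in some $\Lambda_e$ ($e \le d$) or in $\ZZ_+$, a contradiction. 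I expect this last part — making the well-ordering argument for $M_d$ fully rigorous without hand-waving about infinite products — to be the main obstacle, though it is standard; everything in parts (a) and (b) is routine bookkeeping with the definition of $\mu_{\bfn,\bfm}$.
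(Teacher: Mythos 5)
Parts (a) and (b) of your proposal are correct and follow essentially the same route as the paper: trichotomy and the degree comparison give totality, transitivity is a case analysis on $\max(\mu_{\bfn,\bfm},\mu_{\bfm,\bfr})$, and for (b) you correctly observe that adding $\bfr$ changes neither the difference set nor the sign of the discrepancy at $\mu_{\bfn,\bfm}$.

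Part (c), however, has a genuine gap, and it sits exactly where you predicted. Your reduction to showing that each $M_d$ is well-ordered is fine, but neither of your two sketches for that step works as stated. The ``finite product of well-orders'' route is not set up correctly: an element of $M_d$ is supported on $\Lambda_1\sqcup\dots\sqcup\Lambda_d$, which may be \emph{infinite} (each $\Lambda_e$ is merely assumed well-ordered, not finite), so $M_d$ is not a product over a fixed finite index set of positions; only the \emph{size} of the support is bounded by $d$. Likewise, in your alternative argument the phrase ``finitely many possible values of $\mu$'' is false for the same reason, so the pigeonhole step as written does not apply. Both ideas can be repaired --- e.g.\ encode each $\bfm\in M_d$ as the weakly decreasing tuple of length at most $d$ listing $\supp\bfm$ with multiplicities over the well-ordered alphabet $\Lambda_1\sqcup\dots\sqcup\Lambda_d$, check that the order on $M_d$ becomes the lexicographic order on such bounded-length tuples, and invoke that lexicographic orders on tuples of bounded length over a well-order are well-orders --- but that verification is precisely the content of the lemma and cannot be waved away. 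The paper takes a different and cleaner route: it inducts on $d(S)=\min\{d': S\cap M_{d'}\neq\varnothing\}$. After reducing to $S\subseteq M_d$, it uses that $\Lambda$ is well-ordered to pick $\lambda_S=\min\{\max\supp\bfn : \bfn\in S\}$, restricts to the elements realizing this maximum and, among those, to the ones with minimal multiplicity $z_S$ at $\lambda_S$; subtracting $z_S\delta_{\lambda_S}$ from these lands in a subset of $M$ of strictly smaller degree $d-|\lambda_S|z_S$, to which the inductive hypothesis applies (using part (b) to transport minimality back). You should either adopt that descent or carry out the tuple encoding in full.
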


\begin{proof} 
(a)
It is clear that $\bfzero < \bfm$ for any $\bfzero \neq \bfm \in M$ and that exactly one of
$\bfn = \bfm$, $\bfn < \bfm$ or $\bfn > \bfm$ holds for any two $\bfn,\bfm \in M$. 
To check transitivity, consider elements $\bfn < \bfm < \bfr$ of $M$. Then $\bfn \neq \bfr$
and we need to show that $\bfn < \bfr$. For this, we may assume that
$|\bfn| = |\bfm| = |\bfr|$, because otherwise $|\bfn| < |\bfr|$ and we are done.
Put $\mu = \mu_{\bfn,\bfm}$,
$\nu = \mu_{\bfm,\bfr}$ and
$\rho = \mu_{\bfn,\bfr}$. Then $\bfn(\mu) < \bfm(\mu)$,
$\bfm(\nu) < \bfr(\nu)$ and we need to show that $\bfn(\rho) < \bfr(\rho)$.
Note that $\bfn(\rho) \neq \bfr(\rho)$ iff $\bfn(\rho) \neq \bfm(\rho)$ or $\bfm(\rho) \neq \bfr(\rho)$, and therefore $\rho = \max\{\mu,  \nu\}$.
If $\rho = \mu$ we get $\bfn(\rho) < \bfm(\rho) \leq \bfr(\rho)$ and if $\rho = \nu$ we get $\bfn(\rho)\leq \bfm(\rho) < \bfr(\rho)$, both of which give us what we need.

\medskip
  
(b)
By additivity of degrees,  $\bfn+\bfr < \bfm+\bfr$ certainly holds if  $|\bfn| < |\bfm|$. Assume that 
$\bfn < \bfm$ but $|\bfn| = |\bfm|$ and let
$\mu = \mu_{\bfn,\bfm}$; so $\bfn(\mu) < \bfm(\mu)$.
Then we also have $\mu = \mu_{\bfn + \bfr,\bfm + \bfr}$
and $(\bfn+\bfr)(\mu) < (\bfm+\bfr)(\mu)$.

\medskip
    
(c)
Our assumption easily implies that the order of $\Lambda$ is a well-order.
Now let $\varnothing \neq S\subseteq M$ be arbitrary. We wish to find a minimal element in $S$.
Put
\[
d = d(S):= \min \{ d' \in \ZZ_{+} \mid S\cap M_{d'} \neq \varnothing \}. 
\]

If $d = 0$, then $\bfzero \in S$ and $\bfzero$ is the desired minimal element by (a).
So assume $d>0$ and that all 
$\varnothing \neq T \subseteq M$ with $d(T) < d$ have a minimal element. 
The desired minimal element of $S$
must belong to $S \cap M_d$\,; so we may assume without loss that $S \subseteq M_d$\,.
Write $\mu_{\bfn} = \mu_{\bfn,\bfzero} = \max(\supp(\bfn))$ for $\bfzero \neq \bfn \in M$ and put 
\[
\lambda_{S} :=\min\{ \mu_{\bfn} \mid \bfn \in S \} \in \Lambda;
\] 
this is well defined since we have a well-order on $\Lambda$. Consider the subsets
\[
        S' := \{ \bfs \in S \mid \mu_{\bfs} = \lambda_{S}\} \quad \text{and} \quad 
        S'' := \{ \bfs\in S' \mid \bfs(\lambda_{S}) \le \bfs'(\lambda_S) \ \forall \bfs' \in S^{'}\}
\]
and put $z_{S} := \bfs(\lambda_{S})$ for any $\bfs \in S''$; so $z_S \in \ZZ_{> 0}$\,. 
Since all elements of $S''$ are smaller than elements of $S \setminus S''$, 
it suffices to find a minimal element in $S''$.
Notice that, for $\bfr\in S'$, the function $\bfr-z_{S}\delta_{\lambda_{S}}$ belongs to $M$. 
Indeed, $\bfr(\lambda)\geq z_{S}$ for all $\lambda\in \Lambda$ and hence 
$(\bfr-z_{S}\delta_{\lambda_{S}})(\lambda) = \bfr(\lambda) - z_{S} \in \ZZ_{+}$.
By part (b), comparing two elements 
$\bfr, \textbf{t}\in S^{''}$ is equivalent to comparing $\bfr-z_{S}\delta_{\lambda_{S}}$ and $\textbf{t}-z_{S}\delta_{\lambda_{S}}$.
Thus setting $T:= \{\bfs - z_{S}\delta_{\lambda_{S}} \mid \textbf{s}\in S''\}$, our goal is to show that $T$ has a minimal element. 
But $T \subset M$ and all elements of $T$ are of the form $\bfr-z_{S}\delta_{\lambda_{S}}$ where $\bfr\in S''$, so the degree of any element in 
$T$ is $d - |\lambda_{S}| z_S < d$. 
Therefore, $d(T) < d$ and, by our inductive hypothesis, $T$ has a minimal element. This finishes the proof.
\end{proof}


\section{The coradical filtration}
\label{S:corad}

In this section, we recall some standard definitions and state some known facts for later use.

\subsection{Coradically graded coalgebras}
\label{SS:corad}

Let $C$ be a $\k$-coalgebra. 
Throughout, we let $C_0$ denote the \emph{coradical}, that is, the sum of 
all its simple sub-coalgebras of $C$.  
The coalgebra $C$ is said to 
be \emph{connected} if $C_0 = \k$.
The \emph{coradical filtration} of $C$ is recursively  defined by 
\[
C_{j+1}:= \{ x\in C \hspace{0.1in} | \hspace{0.1in} \D(x) \in C_j\ot C + C\ot C_0 \} \qquad (j\geq 0).
\]
We also put $C_{-1}:= \{0\}$.
This yields a coalgebra filtration \cite[Prop. 4.1.5]{dR12}:  
\[
\{0\} = C_{-1}\subseteq C_0 \subseteq \dots \subseteq C_j \subseteq C_{j+1} \subseteq 
\dots \subseteq C= \bigcup_{n\geq 0} C_n
\]
and
\[ 
\D(C_j) \subseteq \sum_{0 \leq i \leq j}C_i \ot C_{j-i} \qquad (j \ge 0).
\]
We consider the graded vector space that is associated to this filtration:
\[
\gr\, C :=  \bigoplus_{n\geq 0}{(\gr\, C)(n)} \quad \text{with} \quad 
(\gr\, C)(n) := {C_n}/{C_{n-1}} \,.
\] 
Any element $0 \neq \bar{c} \in (\gr\, C)(n)$ will be called \emph{homogeneous of 
degree} $n$.
The vector space  $\gr\, C$  inherits a natural coalgebra structure from $C$;
this will be discussed greater detail in \ref{SS:comult} below.
The resulting coalgebra $\gr\,C$ is \emph{coradically graded} \cite[Prop.~4.4.15]{dR12}:  
the coradical filtration of  $\gr\, C$ is given by 
\[
(\gr\, C)_n = \bigoplus\limits_{0 \leq k \leq n}(\gr\, C)(k) \qquad (n\in \ZZ_{+}).
\]
In particular, the coradical of $\gr\, C$ coincides with the coradical of $C$: 
\[
(\gr\, C)_0 = (\gr\, C)(0) = C_0\,.
\]
Hence $\gr\, C$ is connected if and only if $C$ is connected.

\subsection{Some facts about connected Hopf algebras}
\label{SS:ConnFacts}

Let $H$ be a connected Hopf $\k$-algebra. By \ref{SS:corad}, the graded coalgebra $\gr\, H$ is connected as well. 
Below, we list some additional known properties of the coradical filtration $(H_n)$ of $H$ and
of $\gr\, H = \bigoplus_{n \ge 0} H_n/H_{n-1}$:

\begin{enumerate}
\item 
$\gr\, H$ is a \emph{coradically graded Hopf algebra} \cite[Prop.~7.9.4]{dR12}: 
The coradical filtration $(H_n)$ is also an algebra filtration, that is, $H_n H_m \subseteq H_{n+m}$ for all $n,m$.
Furthermore, all $H_{n}$ are stable under the antipode of $H$.
Thus, $\gr\, H$ inherits a natural Hopf algebra structure from $H$, graded as a $\k$-algebra and coradically graded
as $\k$-coalgebra.
\item 
$\gr\, H$ is  \emph{commutative} as $\k$-algebra \cite[Prop.~6.4]{gZ13}.
\item 
If $\ch \k = 0$, then $\gr\, H$ is isomorphic
to a \emph{graded polynomial algebra}: There is an isomorphism of graded algebras, 
\[
\gr\, H \cong \k [z_{\lambda}]_{\lambda \in \Lambda}
\]
for some family $\big(z_{\lambda}\big)_{\lambda \in \Lambda}$ of homogeneous algebraically
independent variables,
necessarily of strictly positive degrees  \cite[Proposition 3.6]{dLySgZxx}.
\end{enumerate}


\section{A Poincar{\'e}-Birkhoff-Witt basis for $H$}

The classical Poincar{\'e}-Birkhoff-Witt (PBW) Theorem (see, e.g., \cite{jD96}, \cite{mL18}) provides a $\k$-basis
for the enveloping algebra $U\fg$ of any Lie $\k$-algebra $\fg$ that consists of certain
\emph{ordered monomials}:
If $(x_{\lambda})_{\lambda\in \Lambda}$ is a $\k$-basis of $\fg$ and $\leq$ is a total order on $\Lambda$,
then a $\k$-basis of $U\fg$ is given by the monomials 
$x_{\lambda_1}x_{\lambda_2}\dots x_{\lambda_{n}}$ with $n\in \ZZ_{+}$ and 
$\lambda_{1}\leq \lambda_{2} \leq \dots \leq \lambda_{n}$\,.
In this section, working in characteristic $0$,
we will construct an analogous basis for an arbitrary connected Hopf algebra $H$, 
formed by ordered monomials in specified algebra generators for $H$. We will call it a PBW basis of $H$.

\medskip

\textit{We assume from this point onward that $H$ is a connected Hopf $\k$-algebra and that $\Char \k = 0$.}

\subsection{PBW basis}
\label{SS:PBW}

We fix a family of homogeneous algebra generators $(z_{\lambda})_{\lambda \in \Lambda}$ of $\gr\, H$ as in 
\ref{SS:ConnFacts}(c), viewing the isomorphism $\gr\, H \cong \k [z_{\lambda}]_{\lambda \in \Lambda}$ 
as an identification. Define a map $|\bdot| \colon \Lambda \rightarrow \ZZ_{> 0}$ by 
\[
|\lambda| := \deg{z_{\lambda}}\,.
\]
Fix a well-order on each set $\Lambda_d := \{ \lambda \in \Lambda \mid |\lambda| = d \}$.
As in \ref{monoidM}, \ref{orderM} consider the monoid
$M = \{ f \colon \Lambda \longrightarrow \ZZ_{+} \hspace{0.1in} | \hspace{0.1in} \supp(f) \text{ is finite}\}$,
equipped with the well-order $\leq$ coming from the chosen orders on each $\Lambda_d$\,. 

We consider the following 
$\k$-basis of $\gr\, H$: 
\begin{equation}
\label{E:barEn}
z_{\bfn} := \prod_{\lambda \in \Lambda}^{}\frac{1}{\bfn(\lambda)!} z_{\lambda}^{\bfn(\lambda)}
= \prod_{\lambda \in \supp\bfn}^{}\frac{1}{\bfn(\lambda)!} z_{\lambda}^{\bfn(\lambda)}
\qquad (\bfn \in M).
\end{equation}
Note that $z_{\bfn}$ is homogeneous of degree $|\bfn| = \sum_{\lambda \in \Lambda}\bfn(\lambda)|\lambda|$. 
Our goal is to lift this basis to obtain a PBW basis of $H$. 
In detail, for every positive integer $k$, let 
\[
\pi_{k} \colon  H_{k} \twoheadrightarrow (\gr\, H)(k)= {H_k}/{H_{k-1}}
\]
be the 
canonical epimorphism. For each $\lambda \in \Lambda$, let $e_{\lambda}\in H_{|\lambda|}\subseteq H$ 
be a fixed pre-image of $z_{\lambda}$ under $\pi_{|\lambda|}$. We put 
\begin{equation}
\label{E:En}
e_{\bfn} = \prod_{\lambda\in \supp\bfn}^{<}\frac{1}{\bfn(\lambda)!} e_{\lambda}^{\bfn(\lambda)} \qquad (\bfn\in M),
\end{equation}
where the superscript $<$ indicates that the factors occur in the order of increasing $\lambda$. 
Note that $e_{\bfn} \in H_{|\bfn|}$ and $\pi_{|\bfn|}(e_{\bfn}) = z_{\bfn}$\,. Furthermore, $e_{\bfzero} = 1$.

\begin{lem}
\label{BasisM}
\begin{enumerate}
\item
$\mathscr{M}_n := \big(e_{\bfn}\big)_{\bfn \in M, |\bfn| \le n}$ is
a basis of $H_n$  $(n \ge 0)$. Hence, $\mathscr{M} := \big(e_{\bfn}\big)_{\bfn \in M}$ is a basis of $H$.
\item
$e_{\bfn}e_{\bfm} - c_{\bfn,\bfm} e_{\bfn+\bfm} \in H_{|\bfn + \bfm|-1}$
for some $c_{\bfn,\bfm} \in \k^\times$.
\end{enumerate}
\end{lem}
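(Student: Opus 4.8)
The plan is to prove both parts together by induction on $n$, exploiting the fact that $\gr H \cong \k[z_\lambda]_{\lambda\in\Lambda}$ is a polynomial algebra with the monomials $z_{\bfn}$ ($\bfn\in M$) as a $\k$-basis, together with the compatibility $\pi_{|\bfn|}(e_{\bfn}) = z_{\bfn}$. For part (a), the statement that $\mathscr M_n$ is a basis of $H_n$ is vacuous for $n=0$ (where $\mathscr M_0 = (e_{\bfzero}) = (1)$ spans $H_0 = \k$). For the inductive step, suppose $\mathscr M_{n-1}$ is a basis of $H_{n-1}$. The elements of $\mathscr M_n$ not already in $\mathscr M_{n-1}$ are the $e_{\bfn}$ with $|\bfn| = n$; I would first observe that the images $\pi_n(e_{\bfn}) = z_{\bfn}$ over $|\bfn| = n$ form a basis of $(\gr H)(n) = H_n/H_{n-1}$, since the $z_{\bfn}$ with $|\bfn|=n$ are exactly the degree-$n$ monomials in the polynomial algebra $\gr H$ and thus a basis of the degree-$n$ component. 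A standard filtered-graded argument then gives that $\mathscr M_{n-1} \cup \{e_{\bfn} : |\bfn| = n\}$ is a basis of $H_n$: any lift of a graded basis of $H_n/H_{n-1}$, together with a basis of $H_{n-1}$, spans $H_n$ and is linearly independent. Since $H = \bigcup_n H_n$, taking the union over all $n$ shows $\mathscr M$ is a basis of $H$.

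For part (b), fix $\bfn,\bfm\in M$ and set $d = |\bfn+\bfm| = |\bfn| + |\bfm|$. By part (a) of \ref{SS:ConnFacts}, the coradical filtration is an algebra filtration, so $e_{\bfn}e_{\bfm} \in H_{|\bfn|}H_{|\bfm|} \subseteq H_d$. Now apply $\pi_d$: since $\gr H$ is commutative (\ref{SS:ConnFacts}(b)) and $\pi$ is multiplicative on the associated graded, $\pi_d(e_{\bfn}e_{\bfm}) = \pi_{|\bfn|}(e_{\bfn})\,\pi_{|\bfm|}(e_{\bfm}) = z_{\bfn} z_{\bfm}$ in $(\gr H)(d)$. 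A direct computation in the polynomial algebra using the definition \eqref{E:barEn} gives $z_{\bfn} z_{\bfm} = c_{\bfn,\bfm}\, z_{\bfn+\bfm}$ where $c_{\bfn,\bfm} = \prod_{\lambda\in\Lambda}\binom{\bfn(\lambda)+\bfm(\lambda)}{\bfn(\lambda)} \in \ZZ_{>0} \subseteq \k^\times$ (here $\ch\k = 0$ is what guarantees this scalar is nonzero; the product is finite since $\supp\bfn \cup \supp\bfm$ is finite). Since $z_{\bfn+\bfm} = \pi_d(e_{\bfn+\bfm})$, we conclude $\pi_d(e_{\bfn}e_{\bfm} - c_{\bfn,\bfm}e_{\bfn+\bfm}) = 0$, i.e.\ $e_{\bfn}e_{\bfm} - c_{\bfn,\bfm}e_{\bfn+\bfm} \in \ker\pi_d = H_{d-1} = H_{|\bfn+\bfm|-1}$, as required.

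The only real subtlety is bookkeeping rather than a genuine obstacle: one must be careful that in \eqref{E:En} the factors $e_\lambda$ are multiplied in increasing order of $\lambda$, whereas in $\gr H$ the order is immaterial by commutativity, so that $\pi$ genuinely sends the ordered monomial $e_{\bfn}$ to $z_{\bfn}$. I would spell out that $\pi$ respects products because it is the associated-graded map of the algebra filtration $(H_n)$, and that $\pi_{|\bfn|}(e_{\bfn}) = z_{\bfn}$ follows by multiplicativity from the defining property $\pi_{|\lambda|}(e_\lambda) = z_\lambda$ of the chosen pre-images, combined with $|\bfn| = \sum_\lambda \bfn(\lambda)|\lambda|$. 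With these identifications in place, both parts reduce to elementary facts about the polynomial algebra $\gr H$.
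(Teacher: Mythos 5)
Your proposal is correct and follows essentially the same route as the paper: part (a) rests on the fact that $\pi_{|\bfn|}(e_{\bfn}) = z_{\bfn}$ and that the $z_{\bfn}$ with $|\bfn|=n$ form a basis of $(\gr H)(n)$, with the basis of $H_n$ obtained by the standard filtered-graded lifting argument (the paper merely packages the linear independence as a single leading-degree projection rather than folding it into the induction), and part (b) is the same computation of $\pi_{|\bfn+\bfm|}(e_{\bfn}e_{\bfm}) = z_{\bfn}z_{\bfm} = c_{\bfn,\bfm}z_{\bfn+\bfm}$ with the same multinomial scalar. No gaps.
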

    
\begin{proof}
(a)
We first prove linear independence of $\mathscr{M}$. 
Suppose that there is a linear relation $\sum_{i=1}^{r}a_{i}e_{\bfn_{i}} = 0$ with $r\geq 1$,
$\bfn_{i}\in M$, and $0 \neq a_i \in \k$ for all $i$.  Put $d = \max \{|\bfn_{i}|\}_{1\leq i \leq r}$. Then 
$\sum_{i=1}^{r}a_{i}e_{\bfn_{i}}\in H_d$ and $\pi_{d}(\sum_{i=1}^{r}a_{i}e_{\bfn_{i}}) 
= \sum_{|\bfn_{i}| = d}a_{i}z_{\bfn_{i}} = 0$. 
Since $\{z_{\bfn}\}_{\bfn\in M}$ are linearly independent in $\gr\, H$, it follows that $a_{i} = 0$ for all 
$i\in \{1,2,...,r\}$ such that $|\bfn_{i}| = d$, contradicting our assumption that all $a_{i} \neq 0$.

Next, we show that $\mathscr{M}_n$ spans $H_n$\,. 
For $n = 0$, this is certainly true because $H_{0} = \k$ and $e_{\bfzero}=1$. 
Now let $n > 0$. For any $h\in H_{n}$, the projection $\pi_{n}(h) \in (\gr\, H)(n)$ can be written as a 
$\k$-linear combination $\pi_{n}(h) = \sum_{i=1}^{r}a_{i}z_{\bfn_{i}}$ with $|\bfn_i| = n$. Thus, 
$h - \sum_{i=1}^{r}a_{i}e_{\bfn_{i}} \in H_{n-1}$. Since $H_{n-1}$ is spanned by $\mathscr{M}_{n-1}$ by induction, 
we deduce that $h$ is in the span of $\mathscr{M}_n$\,.

\medskip

(b)
Note that $e_{\bfn+\bfm} \in H_{|\bfn+\bfm|}$ and also
$e_{\bfn}e_{\bfm} \in H_{|\bfn|}H_{|\bfm|} \subseteq H_{|\bfn + \bfm|}$, where the last inclusion
holds by \ref{SS:ConnFacts}(a) and additivity of $|\bdot|$. 
Furthermore, since $\gr\,H$ is commutative by \ref{SS:ConnFacts}(b), we
have $z_{\bfn}z_{\bfm} = c_{\bfn,\bfm} z_{\bfn+\bfm}$ with $c_{\bfn,\bfm} 
= \prod_{\lambda\in \Lambda}\frac{(\bfn+\bfm)(\lambda)!}{\bfn(\lambda)!\bfm(\lambda)!}\in \k^\times$. Therefore,
by definition of the multiplication of $\gr\,H$,
\[
\pi_{|\bfn + \bfm|}(e_{\bfn}e_{\bfm}) = \pi_{|\bfn|}(e_{\bfn})\pi_{|\bfm|}(e_{\bfm}) 
= z_{\bfn}z_{\bfm} = c_{\bfn,\bfm} z_{\bfn+\bfm} = c_{\bfn,\bfm} \pi_{|\bfn + \bfm|}(e_{\bfn+\bfm}) 
\]
and so $e_{\bfn}e_{\bfm} - c_{\bfn,\bfm} e_{\bfn+\bfm} \in H_{|\bfn + \bfm|-1}$ as claimed.
\end{proof}

The family $\mathscr{M}$ will be our PBW basis for $H$.

\subsection{The comultiplication on the PBW basis}
\label{SS:comult}
We will first prove a result concerning the comultiplication of $H$.

\begin{lem}\label{comH}
$\D(h) \in 1\ot h + h\ot 1 + \sum\limits_{i=1}^{n-1}H_i \ot H_{n-i}$ for any $h \in H_n$\,.
\end{lem}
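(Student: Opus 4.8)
The plan is to exploit the defining property of the coradical filtration together with induction on $n$. Recall that $H_n = \{x \in H \mid \D(x) \in H_{n-1} \ot H + H \ot H_0\}$, and that $H$ is connected so $H_0 = \k 1$. Thus for $h \in H_n$ we already know $\D(h) \in H_{n-1} \ot H + H \ot \k 1$. The goal is to refine this: first to $\sum_{i=0}^{n} H_i \ot H_{n-i}$ using that $(H_n)$ is a \emph{coalgebra} filtration (the displayed inclusion $\D(C_j) \subseteq \sum_{0 \le i \le j} C_i \ot C_{j-i}$ in \ref{SS:corad}), and then to peel off the two extreme terms $H_0 \ot H_n = \k 1 \ot H_n$ and $H_n \ot H_0 = H_n \ot \k 1$ and identify their contributions as $1 \ot h$ and $h \ot 1$ respectively, modulo lower terms.

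Here are the steps. First, handle the base case $n = 0$: if $h \in H_0 = \k 1$, write $h = \alpha 1$, so $\D(h) = \alpha (1 \ot 1) = 1 \ot h + h \ot 1 - h \ot 1$... — more cleanly, for $n=0$ the claimed formula reads $\D(h) \in 1 \ot h + h \ot 1 + (\text{empty sum})$, i.e. $\D(h) = 1 \ot h + h \ot 1$; since $h = \alpha 1$ this says $\alpha(1\ot 1) = \alpha(1 \ot 1) + \alpha(1\ot 1)$, which fails unless $\alpha = 0$. So in fact the statement should be read for $n \ge 1$, or one checks that for $n = 1$, $h \in H_1$ primitive-up-to-$H_0$ works directly; I would simply start the induction at $n = 1$ and note the $n=0$ case is degenerate (or state the lemma implicitly assumes $n \ge 1$, matching its use in \ref{SS:comult}). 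For $n \ge 1$: by the coalgebra filtration property, $\D(h) \in \sum_{i=0}^{n} H_i \ot H_{n-i}$. Project onto the $i = 0$ and $i = n$ summands. Using the counit axioms $(\e \ot \mathrm{id})\D = \mathrm{id} = (\mathrm{id} \ot \e)\D$ together with $H_0 = \k 1$ and $\e(1) = 1$: write $\D(h) = 1 \ot a + b \ot 1 + w$ with $a, b \in H_n$ and $w \in \sum_{i=1}^{n-1} H_i \ot H_{n-i}$ — this decomposition is available because $H = H_n \oplus V$ for some complement... actually cleaner: since $\sum_{i=0}^n H_i \ot H_{n-i} = \k 1 \ot H_n + H_n \ot \k 1 + \sum_{i=1}^{n-1} H_i \ot H_{n-i}$ (the first two terms not necessarily a direct sum, but their overlap is $\k 1 \ot \k 1 \subseteq$ the expression regardless), we may write $\D(h) = 1 \ot a + b \ot 1 + w$. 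Apply $\mathrm{id} \ot \e$: the left side gives $h$; on the right, $1 \ot a \mapsto \e(a) 1 \in \k 1$, $b \ot 1 \mapsto b$, and $w \mapsto \sum H_i \ot \e(H_{n-i}) \subseteq \sum_{i=1}^{n-1} H_i$; but also $\e(H_{n-i}) $ — one needs $\e$ to kill the higher filtration pieces appropriately, which it does not in general, so instead I track things modulo $H_{n-1}$.

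The cleanest route, which I would actually carry out: reduce everything modulo lower filtration terms, i.e. work in $\gr H$. In $\gr H$, the class $\bar h \in (\gr H)(n)$ satisfies, by the coradical-graded structure of \ref{SS:corad} and \ref{SS:ConnFacts}(a), $\D_{\gr H}(\bar h) \in \sum_{i=0}^n (\gr H)(i) \ot (\gr H)(n-i)$. Since $\gr H$ is a \emph{connected graded} Hopf algebra with $(\gr H)(0) = \k 1$, the counit axiom forces the degree-$(0,n)$ component of $\D_{\gr H}(\bar h)$ to be $1 \ot \bar h$ and the degree-$(n,0)$ component to be $\bar h \ot 1$ (this is the standard fact that in a connected graded bialgebra every element is primitive modulo decomposables of the comultiplication — apply $\e \ot \mathrm{id}$ and $\mathrm{id} \ot \e$ degree by degree, using that $\e$ vanishes on positive degrees). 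Lifting back: $\D(h) - 1 \ot h - h \ot 1$ has zero image in $\gr H \ot \gr H$ in the relevant bidegrees, hence lies in $H_{n-1} \ot H_n + H_n \ot H_{n-1} + \sum_{i=1}^{n-1} H_i \ot H_{n-i}$, and the first two of these are absorbed into $\sum_{i=1}^{n-1} H_i \ot H_{n-i}$ after re-indexing together with the $1 \ot h$, $h \ot 1$ correction — more precisely $H_{n-1} \ot H_n \subseteq H_{n-1}\ot H_n$ is not inside $\sum_{i=1}^{n-1}H_i \ot H_{n-i}$, so here I instead argue directly on $H$: decompose $\D(h) \in \sum_{i=0}^n H_i \ot H_{n-i}$ and show the $i=0$ slice is $1 \ot h + (\text{stuff in } \k 1 \ot H_{n-1})$ and symmetrically for $i = n$, then fold the $\k 1 \ot H_{n-1}$ and $H_{n-1}\ot \k 1$ pieces into the middle sum (they are $H_0 \ot H_{n-1}$ and $H_{n-1}\ot H_0$, and $H_0 \otimes H_{n-1} \subseteq H_0 \otimes H_{n-i}$ for $i = 1$, i.e. inside $\sum_{i=1}^{n-1}H_i\ot H_{n-i}$ when $n \ge 2$; the case $n = 1$ is immediate since then the middle sum is empty and $\D(h) \in H_0 \ot H_1 + H_1 \ot H_0 = \k 1 \ot H_1 + H_1 \ot \k 1$, and the counit condition pins it to exactly $1 \ot h + h \ot 1$ — note $\D(1) = 1 \ot 1$ handles the constant part).

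The main obstacle is purely bookkeeping: correctly disentangling the overlapping summands $\k 1 \ot H_n$, $H_n \ot \k 1$, and $\sum_{i=1}^{n-1}H_i \ot H_{n-i}$ and showing the ``extra'' pieces produced by applying the counit (which lie in $\k 1 \ot H_{n-1}$ and $H_{n-1} \ot \k 1$) genuinely land in the middle sum. There is no deep idea required beyond the counit axiom and the coalgebra-filtration property already recorded in \ref{SS:corad}; the one subtlety worth stating carefully is the degenerate behaviour at $n = 0$ (where the asserted identity really should be read as applying for $n \ge 1$, or trivially for $h = 0$), and I would add a half-sentence to that effect rather than leave it ambiguous.
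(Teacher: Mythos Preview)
Your approach is correct and genuinely more elementary than the paper's. The paper takes a rather elaborate detour: it fixes a vector-space splitting $H_n = \bigoplus_{k \le n} H(k)$, builds the isomorphism $j \colon H \longiso \gr H$ and the ``graded part'' $\delta$ of $\Delta$, proves a closure claim for the sets $\sP_n = \{b \in B_n \mid \Delta(b) \in 1 \ot b + b \ot 1 + \sum_{i=1}^{n-1} B_i \ot B_{n-i}\}$ under products and sums, applies this with $B = \gr H$ invoking an external result (\cite[Theorem~3.1]{dLySgZxx}) to obtain $z_\lambda \in \sP_{|\lambda|}$, and only then transfers the conclusion back along $j$. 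Your route bypasses all of this: from the coalgebra-filtration inclusion $\Delta(h) \in \sum_{i=0}^n H_i \ot H_{n-i}$ already recorded in \S\ref{SS:corad}, write $\Delta(h) = 1 \ot a + b \ot 1 + w$ with $a,b \in H_n$ and $w$ in the middle sum, apply $\mathrm{id}\ot\e$ and $\e\ot\mathrm{id}$ to get $a-h,\, b-h \in H_{n-1}$, and absorb $1 \ot (a-h) \in H_0 \ot H_{n-1} \subseteq H_1 \ot H_{n-1}$ and $(b-h)\ot 1 \in H_{n-1}\ot H_0 \subseteq H_{n-1}\ot H_1$ into the middle sum. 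This is the standard counit trick and is considerably shorter; the paper's machinery buys compatibility with the generators $z_\lambda$ used later, but for this lemma alone your argument is preferable.

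One small correction to your write-up: the case $n=1$ is not as clean as you say. For $h \in H_1$ one actually gets $\Delta(h) = 1 \ot h + h \ot 1 - \e(h)(1\ot 1)$, so the stated identity fails when $\e(h)\neq 0$ (e.g.\ $h=1$); your parenthetical ``$\Delta(1)=1\ot1$ handles the constant part'' has it backwards. This glitch is in fact also present in the paper's statement and proof (its assertion that every $z \in (\gr H)_n$ lies in $\sP_n$ breaks on the constant term $z_{\bfzero}=1$), so you are not introducing a new error. Since you already flag the $n=0$ degeneracy, simply sharpen your caveat to $n \ge 2$; this is harmless for the only application (Lemma~\ref{Tech1}), where one may normalise the lifts $e_\lambda$ to satisfy $\e(e_\lambda)=0$.
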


\begin{proof}
We start with some preparations, following the procedure that is described 
in detail in \cite[p.~139ff]{dR12}.
The coradical filtration $(H_n)$ can be used to give $H$ a graded $\k$-vector space structure 
such that $H_n = \bigoplus_{k = 0}^n H(k)$ for all $n \ge 0$ 
and $\epsilon(H(n)) = 0$ for all $n\geq 1$. Then there are isomorphisms 
\[
j_n \colon H(n) \longiso {H_n}/{H_{n-1}} = (\gr\, H)(n) \qquad (n \ge 0)
\]
and we obtain an isomorphism of graded vector spaces,
\[
j := \bigoplus_{n\geq 0} j_n \colon H \longiso \gr\, H\,.
\]
Letting
$p_n \colon H \ot H \onto (H\ot H)(n):= \bigoplus_{r+s = n}H(r)\ot H(s)$
denote the natural projection, we define 
\[
\delta := \bigoplus_{n\geq 0}p_n \circ \D\big| _{H(n)} \colon H \rightarrow H\ot H.
\]
Then, for any $z \in H(n)$ $(n \ge 0)$,
\begin{equation}
\label{E:Dd}
\D(z) - \delta(z) \in \bigoplus_{r < n} (H \ot H)(r) = \bigoplus_{i+j  < n} H(i) \ot H(j).
\end{equation}
Transferring $\delta$ to $\gr\,H$ via $j$, we obtain a comultiplication $\D_{\gr} \colon \gr\,H
\to \gr\,H \ot \gr\,H$ making $\gr\,H$ a graded coalgebra and a commutative diagram:
\begin{equation}
\label{E:jDiagram}
\begin{tikzcd}
H \arrow{r}{\delta} \arrow[swap]{d}{j} & H\ot H \arrow{d}{j\ot j} \\
\gr\, H \arrow{r}{\D_{\gr}} & \gr\, H \ot \gr\, H
\end{tikzcd}
\end{equation}
See \cite[p.~139-141]{dR12} for all this and \cite[Props.~4.4.15, 7.9.4]{dR12} for the fact that the
above construction results in a coradically graded Hopf algebra, $\gr\,H$.

\medskip 

Now let $B$ be a $\k$-bialgebra whose coradical filtration $(B_n)$ is a bialgebra filtration in the sense
of \cite[Definition 5.6.2]{dR12}---we will later take $B = \gr\,H$, where this condition is satisfied. 
For any $n > 0$, we put
\[
\sP_n:= \big\{ b \in B_n \mid \D(z) \in 1\ot b + b \ot 1 +\sum_{i=1}^{n-1}B_i \ot B_{n-i} \big\}.
\]
Clearly, $\sP_n \subseteq \sP_k$ if $k \ge n$ and $\k \sP_n \subseteq \sP_n$ for all $n$. In addition:

\begin{claim*} 
$\sP_n\sP_m \subseteq \sP_{n+m}$ and
$\sP_n + \sP_m \subseteq \sP_{\max (n,m)}$ for all $n,m > 0$.
\end{claim*}

To check this, let $b \in \sP_n$, $c \in \sP_m$\ and put $r:= \max(n,m)$. Then
\[
\begin{aligned}
\D(bc) &= \D(b)\D(c) \\
&\in \big(1 \ot b + b \ot 1 +\sum_{i=1}^{n-1}B_i\ot B_{n-i}\big) 
\big(1 \ot c + c \ot 1 + \sum_{i=1}^{m-1}B_i\ot B_{m-i}\big)\\ 
&\subseteq 1 \ot bc + bc \ot 1 + b\ot c + c\ot b + \sum_{i=1}^{m+n-1}B_i\ot B_{m+n-i} \\
&= 1 \ot bc + bc \ot 1 + \sum_{i=1}^{m+n-1}B_i\ot B_{m+n-i}
\end{aligned}
\]
\[
\begin{aligned}
\D(b+c) &= \D(b)+ \D(c) \\ 
&\in  \big(1\ot b + b\ot 1 +\sum_{i=1}^{n-1}B_i \ot B_{n-i}\big) +  
\big(1\ot c + c\ot 1 +\sum_{i=1}^{m-1}B_i \ot B_{n-i}\big)\\
&\subseteq  1\ot  b +  b \ot 1 + 1\ot  c +  c \ot 1 +\sum_{i=1}^{r-1}B_i \ot B_{r-i}\\
&=1\ot (b + c) + (b + c)\ot 1 + \sum_{i=1}^{r-1}B_i \ot B_{r-i}\,,
\end{aligned}
\]
which proves the Claim.

\medskip

We now apply the foregoing with $B = \gr\,H$ using the notation of \ref{SS:ConnFacts}(c).
By \cite[Theorem 3.1]{dLySgZxx}, we know that $z_{\lambda} \in \sP_{|\lambda|}$
for all $\lambda \in \Lambda$. 
Therefore, the first inclusion in the claim gives that $z_{\bfn} \in \sP_{|\bfn|}$; see \eqref{E:barEn}.
Next, observe that any $z \in (\gr\,H)_n =  (\gr\, H)(0) \oplus \dots \oplus (\gr\, H)(n)$ can be written as 
$z = \sum_{\bfn \in M, |\bfn| \le n} c_{\bfn}z_{\bfn}$\,.
The second inclusion of the claim therefore gives that  $z \in \sP_n$.

To finish, let $h \in H_n$ be given and put $z:= j(h) \in (\gr\,H)_n$\,.
By the foregoing, $z \in \sP_n$ and so 
$\D_{\gr}(z) \in 1 \ot z + z \ot 1 + \sum_{i=1}^{n-1}(\gr\, H)_i\ot(\gr\, H)_{n-i}$\,. 
Diagram \eqref{E:jDiagram} now gives
$\delta(h) \in 1\ot h + h \ot 1 + \sum_{i=1}^{n-1}H_i \ot H_{n-i}$ and
\eqref{E:Dd} further gives 
$\D(h) \in 1\ot h + h \ot 1 + \sum_{i=1}^{n-1} H_i \ot H_{n-i} + \sum_{i=0}^{n-1}H_i\ot H_{n-i-1}$\,.
Finally, note that $\sum_{i=0}^{n-1}H_i\ot H_{n-i-1} \subseteq  \sum_{i=1}^{n-1} H_i \ot H_{n-i}$, because 
$H_0 \ot H_{n-1} \subseteq H_1 \ot H_{n-1}$
and $ H_i\ot H_{n-i-1} \subseteq H_i\ot H_{n-i}$. Thus, 
$\D(h) \in 1\ot h + h \ot 1 + \sum_{i=1}^{n-1} H_i \ot H_{n-i}$, which completes the proof.
\end{proof}

Recall that $\mathscr{M}_{\bfn} = \big(e_{\bfn}\big)_{\bfn \in M, |\bfn|\leq n}$ 
is a basis of $H_{n}$ (Lemma~\ref{BasisM}). For a given $\bfn \in M$, let $H^{<\bfn}$
denote the subspace of $H_{|\bfn|}$
that is spanned by the $e_{\bfi}$ with $\bfi < \bfn$ and put $H^{\le \bfn} = \k e_{\bfn} + H^{<\bfn}$. Furthermore, we put 
$(H\ot H)^{<\bfn} = \sum_{\bfi + \bfj < \bfn} H^{\le\bfi} \ot H^{\le\bfj}$ and
$(H\ot H)^{\le\bfn} = \sum_{\bfi + \bfj \le \bfn} H^{\le\bfi} \ot H^{\le\bfj}$; these are the subspaces of $H \ot H$
that are spanned by the tensors $e_{\bfi} \ot e_{\bfj}$ with $\bfi + \bfj < \bfn$ and $\bfi + \bfj \le \bfn$, respectively

\begin{lem}
\label{Tech1}
For $\bfn \in M$ we have $\D(e_{\bfn}) \in \sum_{\bfm +\bfm' = \bfn} e_{\bfm}\ot e_{\bfm'} + (H\ot H)^{<\bfn}$.
\end{lem}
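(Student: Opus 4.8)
The plan is to combine the multiplicativity of $\D$ with the three results already established: the PBW multiplication rule (Lemma~\ref{BasisM}(b)), the coproduct estimate (Lemma~\ref{comH}), and—crucially—the well-order on $M$ together with its compatibility with addition (Lemma~\ref{OrderM}(b),(c)). We argue by transfinite induction on $\bfn$ with respect to the well-order $\le$ on $M$. The base case $\bfn = \bfzero$ is trivial, since $e_{\bfzero} = 1$ and $\D(1) = 1\ot 1$, which is exactly the claimed sum with the single term $\bfm = \bfm' = \bfzero$ and empty error term.

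For the inductive step, fix $\bfn \neq \bfzero$ and assume the statement holds for all $\bfi < \bfn$. Since $\bfn \neq \bfzero$, we may write $\bfn = \bfr + \delta_\mu$ where $\mu = \max(\supp\bfn)$ and $\bfr = \bfn - \delta_\mu \in M$; note $\bfr < \bfn$ by Lemma~\ref{OrderM}(b) (as $\bfzero < \delta_\mu$ implies $\bfr = \bfr + \bfzero < \bfr + \delta_\mu = \bfn$). By Lemma~\ref{BasisM}(b) applied to $\bfr$ and $\delta_\mu$, we have $e_{\bfr}e_{\mu} = c\, e_{\bfn} + u$ with $c \in \k^\times$ and $u \in H_{|\bfn|-1}$; expanding $u$ in the basis $\mathscr{M}_{|\bfn|-1}$ shows $u \in H^{<\bfn}$ (all basis elements involved have degree $<|\bfn|$, hence lie in $H^{<\bfn}$). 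Thus $e_{\bfn} = c^{-1}(e_{\bfr}e_{\mu} - u)$, so
\[
\D(e_{\bfn}) = c^{-1}\big(\D(e_{\bfr})\D(e_{\mu}) - \D(u)\big).
\]
Now I would control each piece. For $\D(e_{\bfr})$: by induction $\D(e_{\bfr}) = \sum_{\bfa+\bfb = \bfr} e_{\bfa}\ot e_{\bfb} + v$ with $v \in (H\ot H)^{<\bfr}$. For $\D(e_{\mu})$: here $e_{\mu} \in H_{|\mu|}$, so Lemma~\ref{comH} gives $\D(e_{\mu}) \in 1\ot e_{\mu} + e_{\mu}\ot 1 + \sum_{i=1}^{|\mu|-1}H_i\ot H_{|\mu|-i}$. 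The ``main term'' $1\ot e_{\mu} + e_{\mu}\ot 1$ is $\sum_{\bfc+\bfc' = \delta_\mu} e_{\bfc}\ot e_{\bfc'}$, and the remaining sum $\sum_{i=1}^{|\mu|-1}H_i\ot H_{|\mu|-i}$ is spanned by tensors $e_{\bfc}\ot e_{\bfc'}$ with $|\bfc|, |\bfc'| < |\mu|$ and $|\bfc|+|\bfc'| = |\mu|$; any such pair satisfies $\bfc + \bfc' < \delta_\mu$ in $M$ (they have the same degree $|\mu|$, but $\bfc + \bfc'$ cannot equal $\delta_\mu$, and its maximal entry is at some $\nu$ with $|\nu| < |\mu|$, hence $\nu < \mu$, forcing $\bfc+\bfc' < \delta_\mu$), so this piece lies in $(H\ot H)^{<\delta_\mu}$.

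The heart of the argument—and the step I expect to require the most care—is the bookkeeping that assembles these into the claimed form for $\D(e_{\bfn})$. Multiplying the main terms, $\big(\sum_{\bfa+\bfb=\bfr}e_{\bfa}\ot e_{\bfb}\big)\big(\sum_{\bfc+\bfc'=\delta_\mu}e_{\bfc}\ot e_{\bfc'}\big)$, each factor $(e_{\bfa}e_{\bfc})\ot(e_{\bfb}e_{\bfc'})$ expands, via Lemma~\ref{BasisM}(b) again, as a nonzero scalar times $e_{\bfa+\bfc}\ot e_{\bfb+\bfc'}$ plus lower-order terms; since $(\bfa+\bfc)+(\bfb+\bfc') = \bfr+\delta_\mu = \bfn$, the leading tensors contribute to $\sum_{\bfm+\bfm'=\bfn}e_{\bfm}\ot e_{\bfm'}$ (one must check the scalars match up correctly using the multiplicativity of the coproduct on the PBW generators, or alternatively deduce the precise leading coefficient a posteriori by projecting to the associated graded of $H\ot H$), while the lower-order corrections land in $(H\ot H)^{<\bfn}$ by the additivity of $\le$ (Lemma~\ref{OrderM}(b)). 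All the genuinely ``error'' cross-terms—$v \cdot \D(e_{\mu})$, the main term of $\D(e_{\bfr})$ times the error part of $\D(e_{\mu})$, and $\D(u)$—must be shown to lie in $(H\ot H)^{<\bfn}$; for the first two this follows because multiplying a tensor indexed by a pair summing to something $<\bfr$ (resp. $<\delta_\mu$) by a tensor indexed by a pair summing to $\le\delta_\mu$ (resp. $\le\bfr$) yields, after expansion, tensors indexed by pairs summing to $<\bfn$, again by Lemma~\ref{OrderM}(b); for $\D(u)$ one writes $u = \sum c_{\bfi}e_{\bfi}$ with all $\bfi < \bfn$ (and $|\bfi| \le |\bfn|$), applies the inductive hypothesis to each $e_{\bfi}$ to get $\D(e_{\bfi}) \in (H\ot H)^{\le\bfi} \subseteq (H\ot H)^{<\bfn}$. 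The one subtlety to handle cleanly is that Lemma~\ref{comH} and the inductive hypothesis are stated in terms of the coradical filtration and the $M$-indexed filtration respectively, so at each step I would translate between ``$H_i \ot H_j$ with $i+j = $ fixed degree'' and ``span of $e_{\bfi}\ot e_{\bfj}$ with $\bfi+\bfj$ in a given order-ideal of $M$'', which is exactly the content of the spanning statement in Lemma~\ref{BasisM}(a) combined with the degree-then-tiebreaker structure of $\le$.
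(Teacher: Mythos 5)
Your argument is correct in substance but runs a genuinely different induction from the paper's. You induct transfinitely on the well-order of $M$ and peel off a single generator, writing $\bfn=\bfr+\delta_\mu$ and $e_{\bfr}e_{\mu}=c\,e_{\bfn}+u$; the paper instead inducts on $|\supp\bfn|$ and peels off the entire $\mu$-block, using the \emph{exact} factorization $e_{\bfn}=e_{\bfn'}e_{n\delta_\mu}$ with $\bfn'=\bfn-n\delta_\mu$, $n=\bfn(\mu)$, and handling the one-variable case $e_{n\delta_\mu}=e_\mu^n/n!$ separately via the binomial theorem. The payoff of that decomposition is exactly at the step you flag as delicate: since every $\bfa$ with $\bfa+\bfb=\bfn'$ has $\supp\bfa$ consisting of elements $<\mu$, one gets $e_{\bfa}e_{i\delta_\mu}=e_{\bfa+i\delta_\mu}$ on the nose (ordered product with the factorials built in), so $\Sigma(\bfn')\Sigma(n\delta_\mu)=\Sigma(\bfn)$ with all coefficients equal to $1$ and no scalar bookkeeping arises. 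In your version the scalars do close, but you should carry out the check rather than defer it, and it requires exact products, not just Lemma~\ref{BasisM}(b) modulo $H_{|\cdot|-1}$: with $\mu=\max\supp\bfn$ one has $e_{\bfr}e_\mu=n\,e_{\bfn}$ exactly (so $c=n$ and in fact $u=0$) and $e_{\bfa}e_\mu=(\bfa(\mu)+1)\,e_{\bfa+\delta_\mu}$ exactly for $\bfa\le\bfr$, whence the coefficient of $e_{\bfm}\ot e_{\bfm'}$ in $c^{-1}\bigl(\sum_{\bfa+\bfb=\bfr}e_{\bfa}\ot e_{\bfb}\bigr)\bigl(1\ot e_\mu+e_\mu\ot1\bigr)$ is $\tfrac1n\bigl(\bfm(\mu)+\bfm'(\mu)\bigr)=1$. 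Your proposed fallback of reading the leading coefficients off ``a posteriori'' in the associated graded of $H\ot H$ is not a shortcut: the correction terms in $\D_{\gr}(z_\lambda)$ sit in the \emph{same} total degree $|\lambda|$ of $\gr H\ot\gr H$, so the graded projection does not isolate $\Sigma(\bfn)$ from the part of $(H\ot H)^{<\bfn}$ of degree $|\bfn|$, and you would end up redoing the same order-theoretic induction inside $\gr H$. The rest of your bookkeeping---that $v\cdot\D(e_\mu)$, the main term of $\D(e_{\bfr})$ times the error of $\D(e_\mu)$, and $\D(u)$ all land in $(H\ot H)^{<\bfn}$ via Lemma~\ref{OrderM}(b)---is exactly the paper's containment $(H\ot H)^{\le\bfi}(H\ot H)^{<\bfj}\subseteq(H\ot H)^{<\bfi+\bfj}$ and is fine, as is your treatment of the error term of $\D(e_\mu)$ via Lemma~\ref{comH}.
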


\begin{proof}
We start with the following auxiliary observation.
Recall from Lemma~\ref{BasisM}(b) that 
$e_{\bfi}e_{\bfj}\in \k e_{\bfi+\bfj} + H_{|\bfi + \bfj|-1}$ and observe that
the right-hand side is contained in $H^{\le \bfi + \bfj}$. It follows that
$H^{\le \bfn} H^{\le\bfm} \subseteq H^{\le \bfn + \bfm}$ for any $\bfn, \bfm \in M$. Therefore,
\begin{equation}
\label{E:prelim}
\begin{aligned}
(H\ot H)^{\leq\bfn}(H\ot H)^{<\bfm} 
&= \sum_{\substack{\bfi + \bfj \le \bfn \\ \bfk+\bfl < \bfm}} H^{\le\bfi}H^{\le\bfk} \ot H^{\le\bfj}H^{\le\bfl} \\
&\subseteq \sum_{\substack{\bfi + \bfj \le \bfn \\ \bfk+\bfl < \bfm}} H^{\le\bfi + \bfk} \ot H^{\le\bfj +\bfl} \\
&\subseteq \sum_{\bfr+\bfs < \bfn + \bfm} H^{\le\bfr} \ot H^{\le\bfs}  = (H\ot H)^{<\bfn +\bfm}.
\end{aligned}
\end{equation}
Similarly, $(H\ot H)^{<\bfn}(H\ot H)^{\le \bfm} \subseteq (H\ot H)^{<\bfn +\bfm}$.

To prove the Lemma, let us put 
\[
\Sigma(\bfn):= \sum_{\bfm + \bfm' = \bfn} e_{\bfm}\ot e_{\bfm'}\,.
\]
Our goal is to show that $\D(e_{\bfn}) \in \Sigma(\bfn) + (H\ot H)^{<\bfn}$.
This certainly holds for $\bfn =\bfzero$,
since $e_{\bfzero} =1$ and $\D(1) = 1\ot 1 = \Sigma(\bfzero)$.
Now assume that $\bfn \neq \bfzero$ and proceed by induction on $|\supp \bfn|$.
If $|\supp \bfn| = 1$, then
$\bfn = n\d_{\lambda}$ and $e_{\bfn} = e_{n\d_\lambda} = \frac{e_{\lambda}^{n}}{n!}$
for some $\lambda \in \Lambda$, $n \in \ZZ_{>0}$\,.
By Lemma \ref{comH}, we know that
$\D(e_{\lambda}) \in 1\ot e_{\lambda} + e_{\lambda} \ot 1 + 
\sum_{i=1}^{|\lambda| -1} H_{i}\ot H_{|\lambda| - i}$.
The summand $H_{i}\ot H_{|\lambda| - i}$ is spanned by the tensors 
$e_{\bfj} \ot e_{\bfj}$ with $|\bfi| \le i$ and $|\bfj| \le |\lambda| - i$ by Lemma \ref{BasisM}; so $|\bfi + \bfj| \le |\lambda|$.
In addition, $|\bfi|, |\bfj| < |\lambda|$. Thus, if $\mu \geq \lambda$, then $\bfi(\mu) = \bfj(\mu) = 0$,
since $|\lambda| \leq |\mu|$. This shows that $\bfi+\bfj < \delta_{\lambda}$\,.
Therefore, $\sum_{i=1}^{|\lambda| -1} H_{i}\ot H_{|\lambda| - i} \subseteq (H\ot H)^{<\delta_{\lambda}}$ and hence
$\D(e_{\lambda}) \in 1\ot e_{\lambda} + e_{\lambda} \ot 1 + (H\ot H)^{<\delta_{\lambda}}$.
Now we compute:
\[
\begin{aligned}
\D(e_{n\d_{\lambda}}) &= \frac{1}{n!} \D(e_\lambda)^n 
\in\frac{1}{n!} \big(1\ot e_{\lambda} + e_{\lambda}\ot 1 + (H\ot H)^{<\delta_{\lambda}} \big)^{n}\\
&\subseteq \frac{1}{n!} (1\ot e_{\lambda} + e_{\lambda}\ot 1)^{n} + (H\ot H)^{<n\delta_{\lambda}},
\end{aligned}
\]
where the last inclusion
is obtained by expanding the product and using (\ref{E:prelim}) on all but the first summand.
The binomial theorem further gives $\frac{1}{n!} (1\ot e_{\lambda} + e_{\lambda}\ot 1)^{n} = 
\sum_{i=0}^{n} \frac{1}{i!(n-i)!} \, e_{\lambda}^{i}\ot e_{\lambda}^{n-i} 
= \sum_{i=0}^{n} e_{i \d_\lambda} \ot e_{(n-i) \d_\lambda} = \Sigma(n\d_\lambda)$\,. Therefore, 
\[
\D(e_{n\d_{\lambda}})
\in \Sigma(n\d_\lambda) + (H\ot H)^{<n\delta_{\lambda}}.
\]

For the inductive step, let $\bfn \in M$ with $|\supp \bfn| > 1$. Put $\mu:= \max \supp \bfn$, $n:= \bfn(\mu)$,
and $\bfn':= \bfn - n\delta_{\mu}$\,.
Thus, $\supp\bfn' = \supp\bfn \setminus \{\mu\}$
and hence, by induction,
$\D(e_{\bfn'}) \in \Sigma(\bfn') + (H\ot H)^{<\bfn'}$. Furthermore,
\[
e_{\bfn} = \prod_{\lambda\in \supp\bfn}^{<}\frac{1}{\bfn(\lambda)!} e_{\lambda}^{\bfn(\lambda)} 
= \Big( \prod_{\lambda\in \supp\bfn,\lambda \neq \mu}^{<}\frac{1}{\bfn(\lambda)!}\, e_{\lambda}^{\bfn(\lambda)}\Big) 
\frac{e_\mu^n}{n!}  = e_{\bfn'}\, e_{n\d_\mu}  \,.
\]
It follows that
\[
\D(e_{\bfn}) = \D(e_{\bfn'})\D(e_{n\d_\mu}) \in \big(\Sigma(\bfn') + (H\ot H)^{<\bfn'} \big)
\big(\Sigma(n\d_\mu) + (H\ot H)^{<n\delta_{\mu}} \big)
\]
By \eqref{E:prelim}, 
$(H\ot H)^{<\bfn'} \Sigma(n\d_\mu)
\subseteq (H\ot H)^{<\bfn'}(H\ot H)^{\leq n\delta_{\mu}}\subseteq (H\ot H)^{<\bfn}$
as well as
$\Sigma(\bfn')(H\ot H)^{<n\delta_{\mu}}
\subseteq (H\ot H)^{<\bfn}$ and $(H\ot H)^{<\bfn'}(H\ot H)^{<n\delta_{\mu}}\subseteq (H\ot H)^{<\bfn}$. Finally,
$\Sigma(\bfn')\Sigma(n\d_\mu) = \Sigma(\bfn)$. This completes the proof.
\end{proof}


\section{The convolution algebra}
\label{S:ConvAlg}

In this section, we fix an arbitrary $\k$-algebra $R$ (associative, with $1$) and consider
the convolution algebra $\Hom_{\k}(H,R)$; this is a $\k$-algebra with convolution $*$ as multiplication:
\[
(f * g)(h) = f(h_{1}) g(h_{2}) \qquad (f,g \in \Hom_{\k}(H,R), h\in H).
\]
We continue working under the standing assumptions that the Hopf algebra 
$H$ is connected and $\Char \k = 0$.

\subsection{Minimal support elements}

Let $M = \ZZ_{+}^{(\Lambda)}$ be the monoid of Section~\ref{monoidM}
and let $\mathscr{M}$ be the PBW basis of $H$ as in Section~\ref{SS:PBW};
so  $\mathscr{M}\cong M$ as sets via $e_{\bfn} \leftrightarrow \bfn$.
Since every $f \in \Hom_\k(H,R)$ is determined by its values on the basis $\mathscr{M}$,
which can be arbitrarily assigned elements of $R$, we have a bijection
$\Phi \colon \Hom_{\k}(H,R) \longiso R^M$,  the set of all functions $s \colon M \to R$;
explicitly,
\begin{equation}
\label{E:Phi}
(\Phi f)(\bfn) = f(e_{\bfn}) \qquad (f \in \Hom_\k(H,R), \bfn\in M).
\end{equation}

Now assume that $M$ is equipped with the well-order $\le$ of
Lemma~\ref{OrderM}. Note that the support
$\supp \Phi f = \{ \bfn \in M \mid f(e_{\bfn}) \neq 0 \}$
need not be finite if $f \neq 0$, but it does have a smallest element for the well-order $\le$\,. 
For $0 \neq f \in \Hom_\k(H,R)$, we may therefore define
\begin{equation}
\label{E:min}
\bff:= \min\supp\Phi f \in M
\qquad\text{and}\qquad
f_{\min}:= f(e_{\bff}) \in R \setminus \{ 0 \} .
\end{equation}

\begin{lem}
\label{Tech2}
Let $0 \neq f,g \in \Hom_\k(H,R)$ and define $\bff, \bfg \in M$ as in \eqref{E:min}. Then:
\begin{enumerate}
\item 
$(f\ast g)(e_{\bfn}) = 0$ for $\bfn < \bff + \bfg$ and $(f\ast g)(e_{\bff + \bfg}) = f(e_{\bff})\,g(e_{\bfg})$\,.
\item 
If $f(e_{\bff})\,g(e_{\bfg}) \neq 0$, then $f*g \neq 0$ and $(f*g)_{\min} = f(e_{\bff})\,g(e_{\bfg})$\,.
\end{enumerate}
\end{lem}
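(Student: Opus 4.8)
The plan is to use Lemma~\ref{Tech1} to control $\D(e_{\bfn})$ and then evaluate the convolution $f*g$ term by term against the PBW basis. Recall that $(f*g)(e_{\bfn}) = (f\ot g)(\D(e_{\bfn}))$, and by Lemma~\ref{Tech1} we have
\[
\D(e_{\bfn}) \in \sum_{\bfm + \bfm' = \bfn} e_{\bfm}\ot e_{\bfm'} + (H\ot H)^{<\bfn},
\]
where $(H\ot H)^{<\bfn}$ is spanned by tensors $e_{\bfi}\ot e_{\bfj}$ with $\bfi + \bfj < \bfn$. So it suffices to understand $(f\ot g)(e_{\bfi}\ot e_{\bfj}) = f(e_{\bfi})\,g(e_{\bfj})$ for pairs $(\bfi,\bfj)$ with $\bfi + \bfj \le \bfn$.

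\medskip

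First I would prove part (a). The key observation is that, by definition of $\bff$ and $\bfg$ in \eqref{E:min}, $f(e_{\bfi}) = 0$ whenever $\bfi < \bff$ and $g(e_{\bfj}) = 0$ whenever $\bfj < \bfg$; hence $f(e_{\bfi})\,g(e_{\bfj}) \ne 0$ forces $\bfi \ge \bff$ and $\bfj \ge \bfg$. By Lemma~\ref{OrderM}(b) (compatibility of $\le$ with addition), $\bfi \ge \bff$ and $\bfj \ge \bfg$ imply $\bfi + \bfj \ge \bff + \bfg$, with equality only if $\bfi = \bff$ and $\bfj = \bfg$ (using that $M$ is cancellative and $\le$ is a total order: if $\bfi + \bfj = \bff + \bfg$ while $\bfi > \bff$, then adding $\bfj$ gives $\bfi + \bfj > \bff + \bfj \ge \bff + \bfg$, a contradiction). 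Now fix $\bfn \le \bff + \bfg$. Every tensor $e_{\bfi}\ot e_{\bfj}$ occurring in $\D(e_{\bfn})$ with nonzero coefficient has $\bfi + \bfj \le \bfn \le \bff + \bfg$; applying $f\ot g$ kills all such tensors unless $\bfi = \bff$, $\bfj = \bfg$, which can only happen when $\bfn = \bff + \bfg$ and moreover the pair $(\bff,\bfg)$ appears, which by Lemma~\ref{Tech1} it does — with coefficient $1$ — precisely in the leading sum $\sum_{\bfm+\bfm'=\bfn}e_{\bfm}\ot e_{\bfm'}$ (the pair $(\bff,\bfg)$ is a decomposition of $\bfn = \bff+\bfg$, and it does not occur in $(H\ot H)^{<\bfn}$ since there $\bfi + \bfj < \bfn$). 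This gives $(f*g)(e_{\bfn}) = 0$ for $\bfn < \bff+\bfg$ and $(f*g)(e_{\bff+\bfg}) = f(e_{\bff})\,g(e_{\bfg})$.

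\medskip

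Part (b) is then immediate: if $f(e_{\bff})\,g(e_{\bfg}) \ne 0$, part (a) shows $(f*g)(e_{\bff+\bfg}) \ne 0$, so $f*g \ne 0$ and $\supp\Phi(f*g)$ is nonempty; part (a) also shows $(f*g)(e_{\bfn}) = 0$ for all $\bfn < \bff+\bfg$, so $\min\supp\Phi(f*g) = \bff+\bfg$ and $(f*g)_{\min} = (f*g)(e_{\bff+\bfg}) = f(e_{\bff})\,g(e_{\bfg})$.

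\medskip

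The main obstacle is the bookkeeping in part (a): one must carefully separate the contribution of the ``diagonal'' term $\sum_{\bfm+\bfm'=\bfn}e_{\bfm}\ot e_{\bfm'}$ from the error term $(H\ot H)^{<\bfn}$, and verify that the unique surviving pair $(\bff,\bfg)$ — which requires $\bfn = \bff+\bfg$ exactly — contributes with coefficient $1$ and is not secretly cancelled by a contribution from the error term (it is not, because the error term only involves pairs with strictly smaller sum). The order-theoretic lemma that $\bfi \ge \bff$, $\bfj \ge \bfg$, $\bfi+\bfj = \bff+\bfg$ force $\bfi = \bff$, $\bfj = \bfg$ is the one genuinely non-formal ingredient, and it rests squarely on Lemma~\ref{OrderM}(b) together with cancellativity of the monoid $M$.
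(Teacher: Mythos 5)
Your proposal is correct and follows essentially the same route as the paper: apply Lemma~\ref{Tech1} to $\D(e_{\bfn})$, use that $f$ and $g$ vanish below $\bff$ and $\bfg$ respectively together with Lemma~\ref{OrderM}(b) to kill the error term $(H\ot H)^{<\bfn}$ and all off-diagonal pairs, and isolate the single surviving term $f(e_{\bff})\,g(e_{\bfg})$. The only cosmetic difference is that you invoke ``cancellativity of $M$'' where the inequality $\bfi+\bfj>\bff+\bfj\ge\bff+\bfg$ from Lemma~\ref{OrderM}(b) already suffices, exactly as the paper implicitly argues.
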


\begin{proof}
Since part (b) is clear from (a), we will focus on (a).

Note that $f$ vanishes on $H_{}^{<\bff}$, the subspace of $H$ that is generated by the basis elements
$e_{\bfn}$ with $\bfn <\bff$; similarly, $g(H_{}^{<\bfg}) = \{ 0\}$.
For any $e_{\bfi}\ot e_{\bfj} \in (H\ot H)^{<\bff+\bfg}$, we have either $\bfi < \bff$ or $\bfj < \bfg$,
because otherwise $\bfi + \bfj \ge \bff + \bfg$
by Lemma~\ref{OrderM}(b). Hence, $f(e_{\bfi})g(e_{\bfj}) = 0$ and so
\begin{equation}
\label{E:Tech2}
m\circ (f\ot g)(H\ot H)^{< \bff + \bfg} = 0.
\end{equation}

Now let $\bfn\in M$ be such that $\bfn \le \bff + \bfg$.
By Lemma \ref{Tech1},
\[
\D(e_{\bfn}) \in \sum_{\bfm +\bfm' = \bfn} e_{\bfm}\ot e_{\bfm'} + (H\ot H)^{<\bfn}.
\]
Equation \eqref{E:Tech2} gives us that $(f * g)(e_{\bfn}) = 0$ if $\bfn < \bff + \bfg$, and
\[
(f*g)(e_{\bfn}) = 
\sum_{\bfm+\bfm' = \bfn} f(e_{\bfm})g( e_{\bfm'})
\]
if $\bfn = \bff + \bfg$. Consider $(\bfm,\bfm') \in M\times M$ with $\bfm +\bfm' = \bfn$ as in the sum above.
If $\bfm < \bff$ or $\bfm' < \bfg$, then $f(e_{\bfm})g( e_{\bfm'}) = 0$. Therefore, the only contribution
to the sum comes from the pair $(\bfm,\bfm') = (\bff,\bfg)$, proving the lemma.
\end{proof}

\subsection{Subrings of the convolution algebra}
\label{Subrings}

The unit map $u = u_H \colon \k \to H$ gives rise to the following algebra map:
\[
u^* \colon \Hom_\k(H,R) \onto R\,, \quad f \mapsto f(1).
\]
The theorem below is an adaptation of \cite[Lemma 16]{LNY20}.

\begin{thm}
\label{T:Tech3}
Assume that $\ch\k = 0$.
Let $R$ be a $\k$-algebra, let $H$ be a connected Hopf $\k$-algebra, and
let $S \subseteq \Hom_\k(H,R)$ be a subring such that $u^*(S) = R$\,.
If $R$ is prime (semiprime, a domain), then so is $S$.
\end{thm}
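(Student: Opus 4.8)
The plan is to transfer each of the three ring-theoretic properties from $R$ to $S$ by working with the "leading coefficient" data $(\,\bff, f_{\min}\,)$ attached to a nonzero $f \in S$ in \eqref{E:min}, using the multiplicativity established in Lemma~\ref{Tech2}. The crucial structural input is the hypothesis $u^*(S) = R$: since $u^* f = f(1) = f(e_{\bfzero})$, this says that for every $r \in R$ there is some $f \in S$ with $f(e_{\bfzero}) = r$; in particular, for $r \neq 0$ we get an element $f \in S$ with $\bff = \bfzero$ and $f_{\min} = r$. I will call such an $f$ a \emph{lift} of $r$.

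First, the domain case. Suppose $R$ is a domain and let $0 \neq f, g \in S$; I must show $fg \neq 0$ (I write the product in $S$, i.e.\ convolution $\ast$). Set $\bff, \bfg$ as in \eqref{E:min}; then $f(e_{\bff}), g(e_{\bfg}) \in R \setminus \{0\}$, so their product is nonzero because $R$ is a domain, and Lemma~\ref{Tech2}(b) gives $f \ast g \neq 0$ with $(f\ast g)_{\min} = f(e_{\bff}) g(e_{\bfg})$. Hence $S$ is a domain. Next, the prime case. Let $P, Q$ be nonzero ideals of $S$; I want $PQ \neq 0$. Pick $0 \neq f \in P$ and $0 \neq g \in Q$. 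The obstacle here is that $f(e_{\bff}) R\, g(e_{\bfg}) \neq 0$ (by primeness of $R$) gives an element $r \in R$ with $f(e_{\bff})\, r\, g(e_{\bfg}) \neq 0$, but I need this product realized as a leading coefficient inside $PQ$. The fix is to choose a lift $h \in S$ of $r$, so $\bfh = \bfzero$ and $h(e_{\bfzero}) = r$; then consider $f \ast h \ast g$. Applying Lemma~\ref{Tech2}(a) twice (first to $h \ast g$, using $\bfh + \bfg = \bfg$, then to $f \ast (h\ast g)$, using $\bff + \bfg$) yields $(f \ast h \ast g)(e_{\bff + \bfg}) = f(e_{\bff})\, h(e_{\bfzero})\, g(e_{\bfg}) = f(e_{\bff})\, r\, g(e_{\bfg}) \neq 0$, so $f \ast h \ast g \neq 0$. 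Since $f \ast h \ast g \in P S Q \subseteq PQ$, we conclude $PQ \neq 0$ and $S$ is prime.

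The semiprime case is the same argument specialized to $Q = P$: given a nonzero ideal $P$ of $S$, pick $0 \neq f \in P$, use semiprimeness of $R$ to find $r \in R$ with $f(e_{\bff})\, r\, f(e_{\bff}) \neq 0$, lift $r$ to $h \in S$, and observe $(f \ast h \ast f)(e_{2\bff}) = f(e_{\bff})\, r\, f(e_{\bff}) \neq 0$, so $P^2 \ni f \ast h \ast f \neq 0$; hence $S$ is semiprime. One should also record the trivial point that $S \neq 0$: since $u^*(S) = R$ and $R \neq 0$ (a domain, or a prime/semiprime ring, is nonzero by convention), $S$ contains an element $f$ with $f(1) \neq 0$, so $S \neq 0$ and the notions of prime/semiprime/domain are not vacuous.

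The main obstacle is the one flagged above: the naive attempt $PQ \supseteq$ (products $f \ast g$) only controls leading coefficients of the form $f(e_{\bff}) g(e_{\bfg})$, which need not be nonzero even when $R$ is prime, since primeness of $R$ only guarantees $xRy \neq 0$, not $xy \neq 0$. Inserting a lift $h$ of a well-chosen $r \in R$ between $f$ and $g$ is exactly what bridges this gap, and it is the place where the hypothesis $u^*(S) = R$ is essential — without it we would have no control over which elements of $R$ occur as values $f(1)$ of elements of $S$. Everything else is a direct bookkeeping application of Lemma~\ref{Tech2}; in particular the additivity $\bff + \bfg$ behaves well under the well-order by Lemma~\ref{OrderM}(b), which is what makes the iterated computation of $(f \ast h \ast g)(e_{\bff+\bfg})$ legitimate.
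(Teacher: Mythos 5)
Your proof is correct and follows essentially the same route as the paper: both arguments use the leading-coefficient data of Lemma~\ref{Tech2} and insert a lift $h\in S$ of a suitable $r\in R$ (supplied by $u^*(S)=R$) between the two given elements, so that $(f\ast h\ast g)_{\min}=f(e_{\bff})\,r\,g(e_{\bfg})\neq 0$. The only cosmetic difference is that you phrase primeness via ideals $PQ\neq 0$ while the paper uses the equivalent element-wise criterion.
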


\begin{proof}
First assume that $R$ is prime. Let $0 \neq s,t \in S$ be given and let
$s_{\min}, t_{\min} \in R \setminus \{ 0 \}$ be as in \eqref{E:min}. Then $0 \neq s_{\min}rt_{\min}$ for some $r \in R$.
By assumption, there exists an element $u\in S$ such that $u(1) = r$. Since $1 = e_\bfzero$, we
evidently have $\bfzero = \min \supp \Phi u$ and $u_{min} = r$.
It now follows from Lemma \ref{Tech2} that $s*u*t\neq 0$ and
$(s*u*t)_{min}= s_{min}rt_{min}$\,.
This proves that $S$ is prime. 

For the assertions where $R$ is semiprime or a domain, take $s = t$ or $r =1$, respectively.
\end{proof}

Of course, Theorem~\ref{T:Tech3} applies with $S = \Hom_{\k}(H,R)$. Thus, we obtain
the following corollary.

\begin{cor}
\label{C:Tech3}
Let $H$ be a connected Hopf algebra over a field $\k$ of characteristic $0$ and let $R$ be a $\k$-algebra that is prime (semiprime, a domain), then so is the convolution algebra $\Hom_\k(H,R)$.
\end{cor}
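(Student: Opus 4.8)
The plan is to deduce Corollary~\ref{C:Tech3} directly from Theorem~\ref{T:Tech3} by verifying its sole hypothesis when $S = \Hom_\k(H,R)$. Concretely, I would observe that $\Hom_\k(H,R)$ is itself a subring of the convolution algebra (it equals the whole convolution algebra), and that the map $u^* \colon \Hom_\k(H,R) \to R$, $f \mapsto f(1)$, is surjective: given any $r \in R$, the $\k$-linear map $H \to R$ sending $1 \mapsto r$ and killing a complement of $\k 1$ in $H$ (for instance, extending via the PBW basis $\mathscr{M}$ by sending $e_\bfzero = 1 \mapsto r$ and $e_\bfn \mapsto 0$ for $\bfn \neq \bfzero$) lies in $\Hom_\k(H,R)$ and maps to $r$ under $u^*$. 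Hence $u^*(S) = R$ with $S = \Hom_\k(H,R)$.

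With that verification in hand, the corollary is immediate: if $R$ is prime (respectively semiprime, a domain), then by Theorem~\ref{T:Tech3} the ring $S = \Hom_\k(H,R)$ is prime (respectively semiprime, a domain). I would phrase the proof in essentially one or two sentences, since the only content beyond citing Theorem~\ref{T:Tech3} is the trivial surjectivity of $u^*$ on the full convolution algebra. In fact, as the sentence immediately preceding the corollary in the excerpt already notes, ``Theorem~\ref{T:Tech3} applies with $S = \Hom_\k(H,R)$,'' so the corollary is really just recording the specialization.

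There is no substantive obstacle here; the only thing to be careful about is the surjectivity claim for $u^*$, which is genuinely trivial because $\k 1$ is a direct summand of $H$ as a $\k$-vector space (the counit $\e$ splits the unit $u$), so any $r \in R$ is hit by the composite $H \xrightarrow{\e} \k \xrightarrow{r \cdot} R$. Thus I would write:

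\begin{proof}
The counit $\e \colon H \to \k$ splits the unit map $u \colon \k \to H$, so for any $r \in R$ the $\k$-linear map $r\e \colon H \to R$, $h \mapsto \e(h) r$, lies in $\Hom_\k(H,R)$ and satisfies $u^*(r\e) = \e(1) r = r$. Hence $u^*$ is surjective, and Theorem~\ref{T:Tech3} applied with $S = \Hom_\k(H,R)$ gives the claim.
\end{proof}
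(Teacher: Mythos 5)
Your proof is correct and matches the paper's approach exactly: the paper simply notes that Theorem~\ref{T:Tech3} applies with $S = \Hom_\k(H,R)$, the surjectivity of $u^*$ being immediate (the paper already writes $u^*$ with a surjection arrow in \S\ref{Subrings}). Your explicit verification via $r\e$ is a harmless, correct elaboration of the same one-line specialization.
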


\subsection{Proof of Theorem~\ref{Thm}}

We are now ready to give the proof of Theorem~\ref{Thm}, which is also a consequence of
Theorem~\ref{T:Tech3}.

Let $A$ be an $H$-module algebra and let $I$ be an arbitrary ideal of $A$.
The core $(I:H)$ is the kernel of the map
$\rho \colon A \longrightarrow \Hom_{\k}(H,A/I)$ that is defined by
$\rho(a) = (h \mapsto h.a + I)$. So $\rho(A) \cong A/(I:H)$ as rings. Note that
the composite map $u^* \circ \rho \colon A \to \Hom_{\k}(H,A/I) \to A/I$ is 
the canonical epimorphism, $a \mapsto a+I$ $(a \in A)$. So $u^*(\rho(A)) = A/I$.
Therefore, Theorem~\ref{T:Tech3} applies with $S = \rho(A)$ and $R = A/I$.
We obtain that when $A/I$ is prime (semiprime, a domain), then so is $\rho(A)$.
Since $\rho(A) \cong A/(I:H)$, this
is equivalent to the statement of Theorem~\ref{Thm}. \qed

\begin{ack}
The results presented in the foregoing are to form part of the author's Ph.D.~dissertation at Temple 
University.
He would like to thank his adviser, Martin Lorenz, for his guidance and many discussions on the 
subject of this paper.
\end{ack}


\bibliographystyle{plain}
\bibliography{bibliography}


\end{document}